\documentclass[11pt]{amsart}

\usepackage[utf8]{inputenc}
\usepackage{enumerate}
\usepackage{amsfonts,amsmath}
\usepackage{amssymb}
\usepackage{latexsym}
\usepackage{amsthm,amscd}
\usepackage{setspace} 
\usepackage{hyperref}
\usepackage{cite}
\usepackage{enumerate}
\usepackage{stmaryrd}
\usepackage{stackrel}

\newtheorem{Theorem}{Theorem}[section]
 \newtheorem{cor}[Theorem]{Corollary}
 \newtheorem{Lemma}[Theorem]{Lemma}
 
 \theoremstyle{definition}
 
 \theoremstyle{remark}
 \newtheorem{Remark}{Remark}
 \newtheorem{example}{Example}
 \numberwithin{equation}{section}

\def\la{{\lambda}}

\def\Z{{\mathbb Z}}
\def\Q{{\mathbb Q}}

\def\ID2{$(\text{ID}_2)$}

\begin{document}

\title[Idempotent factorizations of matrices over integer rings]{Idempotent factorizations of singular $2\times 2$ matrices over quadratic integer rings}

\author{Laura Cossu}

\address{Laura Cossu \\Dipartimento di Matematica ``Tullio Levi-Civita''\\
 Via Trieste 63\\  35121 Padova, Italy}

\email{laura.cossu@unipd.it} 

%

\author{Paolo Zanardo}

\address{Paolo Zanardo \\Dipartimento di Matematica ``Tullio Levi-Civita''\\ Via Trieste 63\\  35121 Padova, Italy}

\email{pzanardo@math.unipd.it}

\subjclass[2010]{15A23, 11R04, 20G30}
 
\keywords{Idempotent factorizations of $2 \times 2$ matrices, quadratic rings of integers, elementary matrices}
 
\thanks{This research was supported by Dipartimento di Matematica ``Tullio Levi-Civita'' Università di Padova, under Progetto BIRD 2017 - Assegni SID (ex Junior) and Progetto DOR1690814 ``Anelli e categorie di moduli''. The first author is a member of the Gruppo Nazionale per le Strutture Algebriche, Geometriche e le loro Applicazioni (GNSAGA) of the Istituto Nazionale di Alta Matematica (INdAM).
}

\begin{abstract}
Let $D$ be the ring of integers of a quadratic number field $\Q[\sqrt{d}]$. We study the factorizations of $2 \times 2$ matrices over $D$ into idempotent factors. When $d < 0$ there exist singular matrices that do not admit idempotent factorizations, due to results by Cohn (1965) and by the authors (2019). We mainly investigate the case $d > 0$. We employ Vaser\v{s}te\u{\i}n's result (1972) that $SL_2(D)$ is generated by elementary matrices, to prove that any $2 \times 2$ matrix with either a null row or a null column is a product of idempotents. As a consequence, every column-row matrix admits idempotent factorizations. 
\end{abstract}

\maketitle

\section*{Introduction}
In 1967 J. A. Erdos \cite{Erdos} proved that every square matrix with entries in a field that is singular (i.e., with zero determinant) admits {\it idempotent factorizations}, i.e., it can be written as a product of idempotent matrices. His work originated the problem of characterising integral domains $R$ that satisfy the property
ID$_n$: every singular $n \times n$ matrix with entries in $R$ admits idempotent factorizations. We recall the papers by Laffey \cite{Laff1} (1983), Hannah and O'Meara \cite{HannahOmeara} (1989), Fountain \cite{Fount} (1991). 

The  related problem of factorizing invertible matrices into products of elementary factors has been a prominent subject of research over the years (see \cite{Cohn, Omeara, BMS, Vas, Liehl, MRS}).
Following Cohn \cite{Cohn}, we say that $R$ satisfies property GE$_n$ if every invertible $n \times n$ matrix over $R$ can be written as a product of elementary matrices.

Actually, when $R$ is a B\'ezout domain (i.e., every finitely generated ideal is principal) it is not restrictive to confine ourselves to $2 \times 2$ matrices, since, by \cite{Laff1}, \cite{Kap_el_div}, property ID$_2$ and GE$_2$ imply properties ID$_n$ and GE$_n$ for every $n > 0$, respectively.

For $R$ a B\'ezout domain, Ruitenburg \cite{Ruit} (1993) proved the crucial equivalence of the two properties, namely: $R$ satisfies ID$_n$ for every $n\geq 2$ if and only if it satisfies GE$_n$ for every $n\geq 2$. 

Note that fields, Euclidean domains, valuation domains verify the above factorizations properties, but not every Principal Ideal Domain does. Classes of non-Euclidean PIDs not satisfying property GE$_2$ (hence neither ID$_2$, by \cite{Ruit}) may be found in \cite{Cohn} and \cite{CZZ}.

The factorization of matrices into idempotent factors has been extensively studied also in the non-commutative setting, see, for instance, \cite{HannahOmeara, AJLL, AJL_1,FL}.

The above results motivated Salce and Zanardo \cite{SalZan} (2014) to conjecture that an integral domain satisfying ID$_2$ must be a B\'ezout domain. The classes of unique factorization domains, projective-free domains, and PRINC domains (defined in \cite{SalZan}, and later studied in \cite{PerSalZan,CZ_PRINC}) verify the conjecture (cf.\cite{SalZan, BhasRao}). The authors proved in \cite{CZ} that every domain satisfying ID$_2$ must be a Pr\"ufer domain (i.e., a domain in which every finitely generated ideal is invertible) that satisfies property GE$_2$. They showed that a large class of coordinate rings of plane curves and the ring of integer-valued polynomials Int($\mathbb{Z}$) verify the conjecture. Idempotent factorizations of $2 \times 2$ matrices over special Pr\"ufer domains are also investigated in \cite{CZ_Dress}.

In this framework, it is natural to investigate idempotent matrix factorizations over rings of integers in number fields. Deep results solved the corresponding problem of factorizing invertible matrices over rings of integers as products of elementary matrices. Indeed, for $R$ a ring of integers, Bass, Milnor and Serre, \cite{BMS} (1967), proved that $R$ satisfies GE$_n$ for every $n \ge 3$, and Vaser\v{s}te\u{\i}n, \cite{Vas} (1972) (see also \cite{Liehl}), proved that it also  satisfies property GE$_2$.

In this paper we investigate idempotent factorizations of singular $2 \times 2$ matrices over quadratic integer rings. We remark that Cohn \cite{Cohn} proved that every imaginary quadratic integer ring which is not Euclidean does not satisfy property GE$_2$. Then, by the results in \cite{CZ}, these rings do not even satisfy ID$_2$ (see Theorem \ref{Cohn}). For this reason we mostly focus on real quadratic integer rings.

In the first section we prove some preliminary results, whose main purpose is to allow us to reduce to simpler matrices in the process of factorization. In the second section we prove our main result, namely, every matrix $\begin{pmatrix}
x & y\\
0 & 0
\end{pmatrix}$, $x, y$ in a real quadratic integer ring $D$, admits idempotent factorizations. For the proof, we employ the above recalled result by Vaser\v{s}te\u{\i}n \cite{Vas} that $D$ satisfies GE$_2$. As a consequence of the main result, in the final section we show that also the column-row matrices admit idempotent factorizations. Moreover, we provide examples of idempotent factorizations of non-column-row matrices over $D = \Z [\sqrt{10}]$. These examples essentially illustrate how challenging is to find idempotent factorizations, in general.

It is worth noting that our results seem to weaken the above recalled conjecture. However, in spite of the large families of singular $2 \times 2$ matrices we are able to factorize, the existence of a real quadratic integer ring, not a PID, satisfying property ID$_2$, remains an open question.

The authors wish to thank Giulio Peruginelli for several useful discussions.

%
%
%

\section{Notation and preliminary results}

Let $R$ be an integral domain. We will denote by $R^{\times}$ the group of units of $R$ and by $M_n(R)$ the ring of $n\times n$ matrices with entries in $R$. A matrix $\mathbf{M}\in M_n(R)$ is said to be {\it singular} if $\det\mathbf{M}=0$, {\it invertible} if $\det\mathbf{M}\in R^{\times}$. As usual, the general and special linear group over $R$ are denoted by $GL_n(R)$ and $SL_n(R)$.  A matrix $\mathbf{T}\in M_n(R)$ is {\it idempotent} if $\mathbf{T}^2=\mathbf{T}$. Clearly, every non-identity idempotent matrix over an integral domain is singular. 

We remark that every matrix similar to an idempotent matrix is also idempotent, hence a singular matrix $\mathbf{S}$ has an idempotent factorization if and only if any matrix similar to $\mathbf{S}$ is a product of idempotents. The property of being idempotent or product of idempotents is also preserved by transposition. 

We will denote by ID$_n(R)$ the set of singular matrices over $R$ that admit idempotent factorizations. So $R$ verifies the property ID$_n$ of the introduction if and only if ID$_n(R)$ coincides with the set of $n\times n$ singular matrices over $R$. Analogously, we will denote as GE$_n(R)$ the set of invertible matrices in $M_n(R)$ that can be written as products of elementary matrices and $R$ verifies the property GE$_n$ if and only if GE$_n(R)=GL_n(R)$.

In what follows we will focus on $2 \times 2$ matrices.

An easy computation shows that a singular nonzero matrix $\begin{pmatrix}
a & b\\
c & d
\end{pmatrix}\in M_2(R)$ is idempotent if and only if $d = 1 - a$ (cf. \cite{SalZan}). A pair of elements $a, b\in R$ is said to be an {\it idempotent pair} if $(a \ b)$ is the first row of an idempotent matrix.

We will mainly consider matrices of the form $\begin{pmatrix}
x & y\\
0 & 0
\end{pmatrix}$, $x, y \in R$. Hence, to simplify the notation, we will denote by $[ x \ y]$ the singular matrix $\begin{pmatrix}
x & y\\
0 & 0
\end{pmatrix}$.

\begin{Lemma} \label{scambio}
Let $R$ be a integral domain, and $x, y \in R$. Then $ [x \ y]\in {\rm ID}_2(R)$ if and only if $[y \ x]\in {\rm ID}_2(R)$.
\end{Lemma}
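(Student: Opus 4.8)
The statement is symmetric in $x$ and $y$, so it suffices to prove that $[x\ y]\in{\rm ID}_2(R)$ implies $[y\ x]\in{\rm ID}_2(R)$; the reverse implication then follows by interchanging the roles of $x$ and $y$. The plan is to exhibit an explicit identity that recovers $[y\ x]$ from $[x\ y]$ using only operations that preserve the property of being a product of idempotents: conjugation by an invertible matrix, which by the Remark above sends idempotents to idempotents, and multiplication by a single fixed idempotent factor (trivially, appending an idempotent to a product of idempotents keeps it such a product).

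Concretely, I would set $P=\begin{pmatrix}0&1\\1&0\end{pmatrix}$ and $J=\begin{pmatrix}1&1\\0&0\end{pmatrix}$, and verify by direct computation the identity
\[
[y\ x]\;=\;J\,P\,[x\ y]\,P,
\]
which holds because $P\,[x\ y]\,P=\begin{pmatrix}0&0\\y&x\end{pmatrix}$, and left multiplication by $J$ transfers the bottom row $(y\ x)$ into the top row. Here $J$ is idempotent, since it has the form required by the criterion $d=1-a$ recalled above, and $P$ is an involution in $GL_2(R)$. Thus, writing $[x\ y]=E_1\cdots E_n$ as a product of idempotents, conjugation gives $P\,[x\ y]\,P=(PE_1P)\cdots(PE_nP)$, again a product of idempotents, and prepending the idempotent $J$ yields $[y\ x]=J(PE_1P)\cdots(PE_nP)\in{\rm ID}_2(R)$.

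The only real difficulty is locating this identity, and in particular recognizing that one cannot simply right-multiply $[x\ y]$ by the column-swap $P$: although $[y\ x]=[x\ y]\,P$, the matrix $P$ is invertible but not the identity, so it cannot itself be a product of idempotents (an invertible product of idempotents must equal the identity), and right multiplication by it does not preserve ${\rm ID}_2(R)$. The trick is that conjugation by $P$ first repositions $x,y$ into the bottom row, after which the single fixed idempotent $J$ reads them off, now swapped, into the top row. I would stress that the essential move is the multiplication by $J$: since similarity and transposition both preserve the trace, while $[x\ y]$ and $[y\ x]$ have traces $x$ and $y$, no combination of those two operations alone can connect them, and it is precisely the non-trace-preserving idempotent multiplication that makes the argument go through.
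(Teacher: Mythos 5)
Your proof is correct and takes essentially the same route as the paper's: both conjugate $[x\ y]$ by the swap matrix $P$ to obtain $\begin{pmatrix}0&0\\y&x\end{pmatrix}$ and then left-multiply by the idempotent $J=[1\ 1]$ to recover $[y\ x]$, with the reverse implication following by symmetry. Your added observations (that $P$ alone cannot be a product of idempotents, and the trace obstruction to using similarity and transposition only) are sound but not needed for the argument.
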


\begin{proof} Since \[\begin{pmatrix}
0 & 1\\
1 & 0
\end{pmatrix}[x \ y] \begin{pmatrix}
0 & 1\\
1 & 0
\end{pmatrix} = \begin{pmatrix}
0 & 0\\
y & x
\end{pmatrix},\]

if $[x \ y]$ is a product of idempotent matrices, then $\begin{pmatrix}
0 & 0\\
y & x
\end{pmatrix}$ is also a product of idempotents. Moreover
\[
[y \ x] = [ 1 \ 1]\, \begin{pmatrix}
0 & 0\\
y & x
\end{pmatrix},
\]
hence $[y \ x]$ is a product of idempotents. The argument is reversible.
\end{proof}

\medskip

Following the notation in \cite{SalZan}, we say that two nonzero elements $a, b$ of an integral domain $R$ admit a {\it weak (Euclidean) algorithm} if there exists a sequence of divisions
$r_i = q_{i +1}r _{i+1} + r_{i+2}$,  with $r_i , q_i \in R$, $- 1 \le i \le n - 2$, such that $a = r_{-1}$, $b = r_0$ and $r_n = 0$.

We will need the two lemmas that follow. The first one is based on \cite[Th.~14.3]{Omeara}.

\begin{Lemma}\label{OMeara_based}
Let $R$ be an integral domain. If $\begin{pmatrix}
x & y\\
z & t
\end{pmatrix}\in SL_2(R)$ lies in GE$_2(R)$ (i.e., it can be written as a product of elementary matrices), then $x,y\in R$ admit a weak algorithm.
\end{Lemma}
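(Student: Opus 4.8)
The plan is to exhibit a weak algorithm for $x,y$ by reading it off from a reduction of $M=\begin{pmatrix} x & y\\ z & t\end{pmatrix}$ to diagonal form by elementary \emph{column} operations, whose existence is exactly what the hypothesis $M\in\mathrm{GE}_2(R)$ provides. We may assume $x,y\neq0$, since the weak algorithm is defined for nonzero elements (and the determinant condition forces the other entry to be a unit when one of them vanishes). The basic computation is that right multiplication by the two elementary matrices realizes a single division step on the first row: for $q\in R$,
\[
(a\ \ b)\begin{pmatrix} 1 & 0\\ -q & 1\end{pmatrix}=(a-qb\ \ \ b),\qquad (a\ \ b)\begin{pmatrix} 1 & -q\\ 0 & 1\end{pmatrix}=(a\ \ \ b-qa).
\]

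Starting from the first row $(x\ \ y)$ and applying these two operations alternately, choosing the parameters $q_0,q_1,q_2,\dots$ and naming the new entry produced at each step, one obtains precisely a sequence $r_{-1}=x$, $r_0=y$ with $r_i=q_{i+1}r_{i+1}+r_{i+2}$: the first operation gives $x=q_0y+r_1$, the next $y=q_1r_1+r_2$, and so on. Thus a terminating reduction of this kind, namely one after which the first row has the shape $(r_{n-1}\ \ 0)$, is literally a weak algorithm for $x$ and $y$. It therefore suffices to prove that $M$ can be brought to a diagonal unit matrix by an alternating sequence of these two column operations: the $(1,1)$ entry of the resulting diagonal matrix is the last nonzero remainder $r_{n-1}$, and the zero it creates in position $(1,2)$ is $r_n=0$.

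The existence of such a reduction is where I would invoke \cite[Th.~14.3]{Omeara}. Since $M\in SL_2(R)$ lies in $\mathrm{GE}_2(R)$, Cohn's standard-form theory for products of elementary matrices lets us write $M$, up to a diagonal unit factor, as a product $E(a_1)E(a_2)\cdots E(a_k)$ of matrices $E(a)=\begin{pmatrix} a & 1\\ -1 & 0\end{pmatrix}$; equivalently, $M$ reduces to a diagonal unit matrix under right multiplication by the corresponding elementary factors. The continuant identities governing these products identify the parameters $a_i$ with the quotients $q_i$ of the division chain above, so the normal form is exactly an alternating column reduction of the desired type. The \textbf{main obstacle} is precisely this conversion: one must pass from the bare hypothesis ``product of elementary matrices'' to the alternating, continued-fraction normal form, keeping track of the diagonal-unit and sign factors that arise when rewriting $E_{12}$ and $E_{21}$ factors in terms of the $E(a_i)$. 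Such factors only rescale the terminal remainder, itself a unit, and merge consecutive operations of the same type, so they do not affect the existence of the division chain; managing them carefully is the technical heart of the argument and is what the cited theorem supplies.
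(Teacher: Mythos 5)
Your proposal is correct, and its core mechanism is the same as the paper's: read the weak algorithm off the elementary factorization, one elementary factor per division step (you perform column operations on $M$; the paper performs row operations on $M^T$, which is the same computation). The only real difference is how the alternating reduction is obtained, and here you make the step look much harder than it is. You route it through Cohn's standard form $D\,E(a_1)\cdots E(a_k)$ and continuant identities, defer the bookkeeping of diagonal-unit and sign factors to \cite[Th.~14.3]{Omeara}, and call this conversion the ``technical heart.'' Under the paper's definition of GE$_2(R)$ --- products of elementary transvections $\begin{pmatrix} 1 & q\\ 0 & 1\end{pmatrix}$, $\begin{pmatrix} 1 & 0\\ q & 1\end{pmatrix}$ only, with no diagonal factors allowed --- the step is trivial: any such product becomes strictly alternating after merging consecutive factors of the same type and/or inserting factors with parameter $q_i=0$, which is exactly what the paper does (``it might be necessary to introduce some terms with $q_i=0$''). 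So no normal-form theorem, no signs, and no unit factors ever enter. Your heavier route is not wrong, and it has one modest advantage: it would survive under Cohn's original, broader definition of GE$_2$, where invertible diagonal factors genuinely occur and must be pushed aside (they only rescale entries by units, which, as you note, is harmless for a weak algorithm). But as a proof of the lemma as stated, the machinery you invoke replaces a one-line observation with an appeal to an external theorem.
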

\begin{proof}
By assumption, we have $q_0,\dots,q_k\in R$ such that
\[\begin{pmatrix}
x & z\\
y & t
\end{pmatrix}=\begin{pmatrix}
1 & q_0\\
0 & 1
\end{pmatrix}\begin{pmatrix}
1& 0\\
q_1 & 1
\end{pmatrix}\dots \begin{pmatrix}
1 & 0\\
q_k & 1
\end{pmatrix}.\]
We observe that, in order to get the above factorization, it might be necessary to introduce some terms with $q_i=0$. Define 
\begin{align*}
&r_{1}=x-y q_0,\\
&r_{2}=y-r_1 q_1,\\
&r_{i+2}=r_i-r_{i+1}q_{i+1}\qquad\text{for}\qquad i=1,\dots,k-1.
\end{align*}
Then, premultiplying $\begin{pmatrix}
x & z\\
y & t
\end{pmatrix}$ by $\begin{pmatrix}
1 & -q_0\\
0 & 1
\end{pmatrix}$, then by $\begin{pmatrix}
1 & 0\\
-q_1 & 1
\end{pmatrix}$ and so on, we get from the matrix equation above that 
\[\begin{pmatrix}
r_k & *\\
r_{k+1} & *
\end{pmatrix}=\begin{pmatrix}
1 & 0\\
0 & 1
\end{pmatrix}.\]
Therefore there exists a finite sequence of relations of the form
\[r_i=q_{i+1}r_{i+1}+r_{i+2},\quad\text{with }i=-1,\dots,k-1\]
such that $r_{-1}=x$, $r_0=y$ and $r_{k+1}=0$. So $R$ admits a weak Euclidean algorithm.
\end{proof}

The next lemma follows from \cite[Th.~6]{AJLL}. For the reader's convenience we give the direct proof, inspired by that of \cite[Th.~6.2]{SalZan}. 

\begin{Lemma}\label{we_ID2}
Let $R$ be an integral domain. If $x,y\in R$ admit a weak algorithm, then $[x \ y] \in {\rm ID}_2(R)$.
\end{Lemma}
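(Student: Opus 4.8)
The plan is to induct on the length $n$ of the weak algorithm for the pair $(x,y)$, writing $x = r_{-1}$, $y = r_0$, with the divisions $r_{i} = q_{i+1}r_{i+1} + r_{i+2}$ and $r_n = 0$. For the base case $n=1$ one has $r_1 = 0$, i.e. $x = q_0 y$, and I would exhibit the explicit factorization
\[
[x\ y] = [q_0 y\ y] = \begin{pmatrix} 1 & y \\ 0 & 0\end{pmatrix}\begin{pmatrix} 0 & 0\\ q_0 & 1\end{pmatrix},
\]
checking that both factors satisfy the criterion $d = 1-a$ recalled above (so each is idempotent); this already places $[x\ y]$ in ${\rm ID}_2(R)$.

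For the inductive step, the relation $r_{-1} = q_0 r_0 + r_1$ gives the matrix identity
\[
[r_{-1}\ r_0] = [r_1\ r_0]\begin{pmatrix} 1 & 0\\ q_0 & 1\end{pmatrix},
\]
while the pair $(r_0, r_1)$ inherits a weak algorithm of length $n-1$. Hence, by the inductive hypothesis together with Lemma~\ref{scambio} (which lets me exchange the two entries of a row freely), I may assume that $[r_1\ r_0]$ is a product of idempotents. The whole difficulty is that the factor $\begin{pmatrix} 1 & 0\\ q_0 & 1\end{pmatrix}$ is elementary but \emph{not} idempotent, so one cannot simply append it; moreover a single rank-one matrix times an elementary matrix need not be a product of idempotents in general, so the weak-algorithm structure must be used essentially.

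I would resolve this by strengthening the inductive hypothesis so as to record the \emph{shape} of the terminal (rightmost) idempotent factor, and then absorb the elementary matrix into it by means of the identities
\[
\begin{pmatrix}0&0\\\g&1\end{pmatrix}\begin{pmatrix}1&0\\q&1\end{pmatrix} = \begin{pmatrix}0&0\\\g+q&1\end{pmatrix},\qquad \begin{pmatrix}1&\mu\\0&0\end{pmatrix}\begin{pmatrix}1&q\\0&1\end{pmatrix} = \begin{pmatrix}1&\mu+q\\0&0\end{pmatrix},
\]
which show that right multiplication by an elementary matrix can be swallowed by a terminal idempotent of the matching type, producing again an idempotent of the same type. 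Propagating this shape through the induction — and invoking Lemma~\ref{scambio} precisely when the Euclidean step switches which of the two entries is being reduced, so that the available terminal shape matches the elementary factor to be absorbed — is what makes the non-idempotent factor disappear from the product. I expect this bookkeeping (keeping the terminal idempotent of the correct type aligned with the alternating reductions dictated by the weak algorithm) to be the main obstacle: the base case and the reduction identity are routine, whereas the absorption step is where the argument of \cite[Th.~6.2]{SalZan} does the real work.
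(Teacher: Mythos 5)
Your base case, the identity $[r_{-1}\ r_0]=[r_1\ r_0]\begin{pmatrix}1&0\\ q_0&1\end{pmatrix}$, and the two absorption identities are all correct, and you have located the real difficulty (the dangling elementary factor). But the strengthened inductive hypothesis you propose is not just hard to propagate: it is false. If $M=N\,T$ with $T=\begin{pmatrix}0&0\\ \gamma&1\end{pmatrix}$, then comparing columns gives $Me_1=\gamma\,Ne_2=\gamma\,Me_2$, i.e.\ the first column of $M$ is $\gamma$ times the second; dually, a terminal factor $\begin{pmatrix}1&\mu\\0&0\end{pmatrix}$ forces the second column of $M$ to be $\mu$ times the first. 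So $[u\ v]$ admits an idempotent factorization ending in the lower shape only if $v\mid u$, and one ending in the upper shape only if $u\mid v$. In your inductive step the matrix that must absorb $\begin{pmatrix}1&0\\ q_0&1\end{pmatrix}$ is $[r_1\ r_0]$, so its factorization must end in the lower shape, which forces $r_0\mid r_1$: the divisor would have to divide the remainder, which fails for every genuine division step (the same obstruction, again $r_0\mid r_1$, appears if you instead use $[r_0\ r_{-1}]=[r_0\ r_1]\begin{pmatrix}1&q_0\\0&1\end{pmatrix}$ and the upper shape). Concretely, over $\Z$ with $(x,y)=(5,3)$ your first inductive step needs $[2\ 3]$ to have an idempotent factorization ending in $\begin{pmatrix}0&0\\ \gamma&1\end{pmatrix}$, i.e.\ $2=3\gamma$ in $\Z$, which is impossible even though $[2\ 3]$ certainly is a product of idempotents. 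No bookkeeping can rescue a right-multiplication argument: for $q\neq 0$, an idempotent $\begin{pmatrix}a&b\\c&1-a\end{pmatrix}$ satisfies ``$\begin{pmatrix}a&b\\c&1-a\end{pmatrix}\begin{pmatrix}1&0\\ q&1\end{pmatrix}$ idempotent'' only when $bq=0$, so the only absorbing terminal factors are $\begin{pmatrix}0&0\\ c&1\end{pmatrix}$ and $\begin{pmatrix}1&0\\ c&0\end{pmatrix}$, and the latter forces the even stronger condition that the second column of the product vanish.

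The repair is to trade one-sided multiplication for similarity, which is exactly what the paper does. From $x=q_0y+r_1$ one gets
\[
\begin{pmatrix}1&0\\ q_0&1\end{pmatrix}[x\ y]\begin{pmatrix}1&0\\ -q_0&1\end{pmatrix}=\begin{pmatrix}r_1&y\\ q_0r_1&q_0y\end{pmatrix}=\begin{pmatrix}1&0\\ q_0&0\end{pmatrix}[r_1\ y],
\]
and $\begin{pmatrix}1&0\\ q_0&0\end{pmatrix}$ is idempotent. Since conjugation by an invertible matrix distributes over a product and preserves idempotency, ``product of idempotents'' is a similarity invariant; hence $[x\ y]\in{\rm ID}_2(R)$ as soon as $[r_1\ y]\in{\rm ID}_2(R)$, and no elementary factor is left over. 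The next division $y=q_1r_1+r_2$ is a pure similarity: conjugating by $\begin{pmatrix}1&q_1\\0&1\end{pmatrix}$ carries $[r_1\ y]$ to $[r_1\ r_2]$, because left multiplication by that matrix fixes matrices with zero second row. Iterating these two moves one lands on $[r_{n-1}\ 0]$ or $[0\ r_{n-1}]$, each of which factors explicitly into two idempotents --- essentially your base case. With this replacement your induction (or a simple iteration, as in the paper) goes through, and Lemma \ref{scambio} is no longer needed inside the argument.
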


\begin{proof}
 By assumption, there exists a finite sequence of equalities $r_i=q_{i+1}r_{i+1}+r_{i+2}$ with $i=-1,\dots,n-2$ such that $r_{-1}=x$, $r_{0}=y$ and $r_n=0$. At the first step, we get $x=y q_0+r_1$ and we get the following relation of similarity:
\[\begin{pmatrix}
1 & 0\\
q_0 & 1
\end{pmatrix}[x \ y]\begin{pmatrix}
1 & 0\\
-q_0 & 1
\end{pmatrix}=\begin{pmatrix}
r_1 & b\\
q_0r_1 & q_0 y
\end{pmatrix}.\]
Therefore, to verify that $[x \ y] \in {\rm ID}_2(R)$, it suffices to show that $\begin{pmatrix}
r_1 & b\\
q_0r_1 & q_0 y
\end{pmatrix}$ is a product of idempotents. Since 
\[\begin{pmatrix}
r_1 & y\\
q_0 r_1 & q_0 y
\end{pmatrix}=\begin{pmatrix}
1 & 0\\
q_0 & 0
\end{pmatrix}[r_1 \ y]\]
 it is enough to show that $[r_1 \ y]\in {\rm ID}_2(R)$. At the second step of the algorithm we get $y=q_1r_1+r_2$ and 
\[\begin{pmatrix}
1 & q_1\\
0 & 1
\end{pmatrix}[r_1 \ y]\begin{pmatrix}
1 & -q_1\\
0 & 1
\end{pmatrix}=[r_1 \ r_2].\]
Hence $[r_1 \ y]$ is similar to $[r_1 \ r_2]$ and it suffices to prove that this latter matrix is product of idempotents. Repeating this procedure, after $n-2$ steps, it remains to prove that either $[r_{n} \ r_{n-1}]\in {\rm ID}_2(R)$ or $[r_{n-1} \ r_{n}]\in {\rm ID}_2(R)$. Since $r_{n}=0$, we get in the first case
\[[0 \ r_{n-1}]=[1 \ 0]\begin{pmatrix}
0 & r_{n-1}\\
0 & 1
\end{pmatrix},\]
and in the second case
\[[r_{n-1} \ 0]=[1 \ -1]\begin{pmatrix}
1 & 0\\
1-r_{n-1} & 0
\end{pmatrix}.\]
Finally, we conclude that $[x \ y]\in ID_2(R)$. 
\end{proof}

We refer to \cite{NZM} for the notions in Algebraic Number Theory that we will need in the remainder of the paper.

Let $d$ be a square-free integer and let $k$ be the real quadratic number field $k=\Q[\sqrt{d}]$. Let $D$ be the {\it ring of integers of $k$}, i.e., the integral closure of $\Z$ in $k$. Recall that
\[D=\begin{cases}\Z[\sqrt{d}]=\{\alpha+\beta\sqrt{d}\,|,\alpha,\beta \in \Z\}\quad\text{ if }d\equiv 2,3\mod 4\\
\Z\left[\frac{1+\sqrt{d}}{2}\right]\{\frac{\alpha+\beta\sqrt{d}}{2}\,|,\alpha,\beta \in \Z, \alpha\equiv \beta\mod 2\}\quad\text{ if }d\equiv 1\mod 4
\end{cases}.\]

\begin{Lemma}\label{x_integer}
Let $d>0$ be square-free integer, $D$ the ring of integers of $k=\Q[\sqrt{d}]$ and $x,y$ two non-zero elements in $D$. Then there exist $\alpha\in\Z$ and $w\in D$ such that $[x \ y]\in {\rm ID}_2(D)$ if and only if $[\alpha \ w]\in {\rm ID}_2(D)$.
\end{Lemma}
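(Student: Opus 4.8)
The plan is to exploit the $\Z$-module structure of $D$. Write $D=\Z\oplus\Z\omega$, where $\omega=\sqrt d$ if $d\equiv 2,3\pmod 4$ and $\omega=(1+\sqrt d)/2$ if $d\equiv 1\pmod 4$, and record each element of $D$ by its two integer coordinates: $x=x_1+x_2\omega$, $y=y_1+y_2\omega$ with $x_i,y_i\in\Z$. The obstruction to $x$ (resp.\ $y$) being a rational integer is precisely the $\omega$-coordinate $x_2$ (resp.\ $y_2$). The idea is to annihilate one of these coordinates by running the Euclidean algorithm in $\Z$ on the integer pair $(x_2,y_2)$, while realizing each Euclidean step as an ${\rm ID}_2(D)$-preserving operation on $[x\ y]$. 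The crucial point is that, although $D$ need not be Euclidean, its coordinate ring $\Z$ is.

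The operations I would use are (a) the exchange $[x\ y]\mapsto[y\ x]$, which preserves membership in ${\rm ID}_2(D)$ by Lemma \ref{scambio}, and (b) for $c\in\Z$ the replacement $[x\ y]\mapsto[x\ \ y-cx]$. For (b) note that, since the bottom row of $[x\ y]$ vanishes, $\begin{pmatrix}1&c\\0&1\end{pmatrix}[x\ y]=[x\ y]$, whence
\[
\begin{pmatrix}1&c\\0&1\end{pmatrix}[x\ y]\begin{pmatrix}1&c\\0&1\end{pmatrix}^{-1}=[x\ \ y-cx];
\]
thus $[x\ \ y-cx]$ is \emph{similar} to $[x\ y]$, and similar matrices are simultaneously products of idempotents or not. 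Combining (a) and (b) yields the mirror reduction $[x\ y]\mapsto[x-cy\ \ y]$ as well. At the level of $\omega$-coordinates, operation (b) sends $(x_2,y_2)$ to $(x_2,\,y_2-cx_2)$ and its mirror sends it to $(x_2-cy_2,\,y_2)$, which are exactly the two elementary steps of the Euclidean algorithm on $(x_2,y_2)$, performed with $c\in\Z$.

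Hence I would repeatedly apply (a) and (b) to run the Euclidean algorithm on $(x_2,y_2)$ until one of the two coordinates becomes $0$; the corresponding entry of the transformed matrix then has vanishing $\omega$-coordinate, so it lies in $\Z$. After a possible final exchange via Lemma \ref{scambio}, this produces $[\alpha\ w]$ with $\alpha\in\Z$ and $w\in D$. Since every step applied is a genuine ${\rm ID}_2(D)$-equivalence, $[x\ y]\in{\rm ID}_2(D)$ if and only if $[\alpha\ w]\in{\rm ID}_2(D)$, as required; the case $x_2=y_2=0$ is immediate, and the possibility $\alpha=0$, which occurs exactly when $x$ and $y$ are rationally proportional, is permitted by the statement. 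The only delicate point—and the sole place where the argument could degenerate into a one-way implication—is the verification that operation (b) is a true two-sided equivalence; this is precisely what the similarity identity above secures, and it is essential that the division be carried out on the integer coordinates $(x_2,y_2)$ inside the Euclidean ring $\Z$ rather than attempted inside $D$ itself.
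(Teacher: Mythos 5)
Your proposal is correct and follows essentially the same route as the paper's own proof: the paper likewise runs the Euclidean algorithm in $\Z$ on the $\sqrt{d}$-coordinates of $x$ and $y$, realizing each division step as a similarity by an elementary matrix $\begin{pmatrix}1 & *\\ 0 & 1\end{pmatrix}$ and interleaving swaps via Lemma \ref{scambio} until one entry lands in $\Z$. Your explicit observation that $\begin{pmatrix}1&c\\0&1\end{pmatrix}[x\ y]=[x\ y]$, so that the conjugation reduces to a column operation, is exactly the mechanism the paper uses (and your signs are handled more carefully than in the paper's displayed formula).
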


\begin{proof}
We assume that $d\equiv 2,3$ modulo $4$, the case $d\equiv 1$ modulo $4$ being wholly analogous. Set $x=x_1+x_2 \sqrt{d}$ and $y=y_1+ y_2 \sqrt{d}$ with $x_1,x_2,y_1,y_2 \in\Z$. By Lemma \ref{scambio} the result is immediate if either $x\in\Z$ or $y\in\Z$, hence we may assume $x_2 \ne 0 \ne y_2$. Apply the Euclidean algorithm to the couple of integers $x_2,y_2$. There exists a finite sequence of $n\geq 1$ divisions with last reminder 0:
\begin{align*}
&y_2=q_1 x_2+r_1\\
&x_2=q_2 r_1+r_2\\
&\dots\\
&r_{n-2}=q_{n}r_{n-1}
\end{align*}
Note that
$$
\begin{pmatrix}
1 & - q_1\\
0 & 1
\end{pmatrix} [x_1+x_2 \sqrt{d} \quad y_1+y_2 \sqrt{d}]\begin{pmatrix}
1 & q_1\\
0 & 1
\end{pmatrix} = [x_1+x_2 \sqrt{d}  \quad w_1+r_1\sqrt{d}],
$$ 
where $w_1=y_1-q_1 x_1$. Therefore 
\[ [x \ y]\in {\rm ID}_2(D)\Leftrightarrow [x_1+x_2 \sqrt{d}  \quad w_1+r_1 \sqrt{d}]\in {\rm ID}_2(D).\] 

Moreover, this happens if and only if $[w_1+r_1 \sqrt{d} \quad x_1+x_2 \sqrt{d} ]\in {\rm ID}_2(D)$ by Lemma \ref{scambio}.
Iterating the process, at the $(n-1)$-th step, we get that 
 \[[x \ y]\in {\rm ID}_2(D)\Leftrightarrow [w_{n-1}+r_{n-1} \sqrt{d}  \quad w_{n-2}+r_{n-2} \sqrt{d}]\in {\rm ID}_2(D),\]
with $w_{j}=w_{j-2}-q_j w_{j-1}$ for $j=1,\dots,n$ (we assume $w_{-1}=y_1$ and $w_0=x_1$).

Since $[w_{n-1}+r_{n-1} \sqrt{d}  \quad w_{n-2}+r_{n-2} \sqrt{d}]$ is similar to  $[w_{n-1}+r_{n-1} \sqrt{d}  \quad w_{n}]$, we conclude that 
\[ [x \ y]\in {\rm ID}_2(D)\Leftrightarrow [
w \ z] \in {\rm ID}_2(D),\]
with $w=w_n\in\Z$ and $z= w_{n-1}+r_{n-1} \sqrt{d}$.
\end{proof}

The next result, essentially due to Cohn \cite{Cohn}, leads to confine our investigation to the case where $d > 0$.

\begin{Theorem}\label{Cohn}
Let $d < 0$ be a square-free integer, $D$ the imaginary ring of integers of $k=\Q[\sqrt{d}]$. Then $D$ does not satisfy property {ID}$_2$, except when $d = -1, -2, -3, -7, -11$, for which values $D$ is an Euclidean domain.
\end{Theorem}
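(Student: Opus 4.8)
The plan is to obtain this statement as a direct consequence of two results already recalled in the introduction, rather than through any new matrix manipulation. The first is Cohn's theorem \cite{Cohn} that every imaginary quadratic integer ring which is not Euclidean fails property GE$_2$; the second is the authors' result \cite{CZ} that any domain satisfying ID$_2$ necessarily satisfies GE$_2$. Reading the latter contrapositively, failure of GE$_2$ forces failure of ID$_2$, so the entire argument reduces to determining for which $d<0$ the ring $D$ is Euclidean.

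First I would dispose of the exceptional values. For $d=-1,-2,-3,-7,-11$ the ring $D$ is (norm\nobreakdash-)Euclidean, and Euclidean domains verify ID$_2$ as noted in the introduction; these therefore lie genuinely outside the conclusion and account for the ``except'' clause. Here one must invoke the classical classification of Euclidean imaginary quadratic integer rings, including the nontrivial fact that these five are the \emph{only} such $D$ admitting a Euclidean function at all, and not merely the only norm-Euclidean ones.

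For every other square-free $d<0$ the ring $D$ is non-Euclidean by that same classification, so Cohn's theorem shows that $D$ does not satisfy GE$_2$. The contrapositive of the implication $\text{ID}_2\Rightarrow\text{GE}_2$ from \cite{CZ} then gives that $D$ does not satisfy ID$_2$, as claimed. This already covers the values $d=-19,-43,-67,-163$, for which $D$ is a principal ideal domain that nonetheless fails GE$_2$. No genuine difficulty arises in the argument itself; the only points requiring care are the Euclidean classification behind the exceptional list and the orientation of the \cite{CZ} implication — it is the \emph{failure} of GE$_2$, supplied by Cohn, that propagates to the failure of ID$_2$.
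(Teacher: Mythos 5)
Your proposal is correct and follows essentially the same route as the paper: the paper's proof cites Cohn's Theorem~6.1 (which directly asserts failure of GE$_2$ for all square-free $d<0$ outside the five exceptional values) together with Proposition~3.4 of \cite{CZ} (ID$_2$ implies GE$_2$), applied contrapositively, exactly as you do. The only cosmetic difference is that you unpack Cohn's theorem into ``non-Euclidean $\Rightarrow$ not GE$_2$'' plus the classification of Euclidean imaginary quadratic rings, whereas the paper invokes Cohn's statement with the exceptional list already built in.
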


\begin{proof}
By Theorem 6.1 of Cohn's paper \cite{Cohn}, $D$ does not satisfy property GE$_2$, for every $d < 0$ different from the values listed in the statement. By Proposition 3.4 in \cite{CZ}, such integer ring does not satisfy ID$_2$, as well. 
\end{proof}

In view of Theorem \ref{Cohn}, in the remainder of the paper we will focus on rings of integers of $\Q[\sqrt{d}]$, with $d > 0$ a square-free integer.

\begin{Remark}
In the proof of the above theorem, we used the fact that an arbitrary integral domain satisfying ID$_2$ satisfies GE$_2$, as well. We recall that the converse is not true. For instance, any local domain satisfies GE$_n$ for every $n > 0$, while it satisfies ID$_2$ only when it is a valuation domain (see Remark 1 of \cite{CZ}).
\end{Remark}

\section{Main results}

In what follows $D$ will denote the ring of integers of $\Q[\sqrt{d}]$, $d > 0$ a square-free integer. In this section we prove that all the matrices $[x \ y]$, $x, y \in D$, admit idempotent factorizations. 

\medskip

Let $k$ be a number field. Let $S$ be a finite subset of the set $V^k$ of the valuations of $k$ containing the set $V^k_{\infty}$ of archimedean valuations. The ring of $S$-integers $\mathcal{O}_{k,S}$ in $k$ is:
\[\mathcal{O}_{k,S}=\{a\in k^*\,|\, v(a)\geq 0 \text{ for all }v\in V^k\setminus S\}\cup \{0\}.\]

 Vaser\v{s}te\u{\i}n in \cite{Vas} (see also \cite{Liehl}) proved that, if the ring of $S$-integers $\mathcal{O}_{k,S}$ has infinitely many units, then the group $SL_2(\mathcal{O}_{k,S})$ is {\it boundedly} generated by elementary matrices. The main result in \cite{MRS} shows that every matrix in $SL_2(\mathcal{O}_{k,S})$, with $\mathcal{O}_{k,S}^{\times}$ infinite, is a product of at most $9$ elementary matrices.
 
 We remark that the real quadratic integer ring $D$ has infinitely many units, by a classical result in number theory (cf. \cite[Th.~9.23]{NZM}). Then Vaser\v{s}te\u{\i}n's result shows that $D$ satisfies property GE$_2$. From this fact we will derive properties of idempotent factorizations of singular matrices over $D$. 

The following preliminary theorem is valid for any ring of $S$-integers with infinitely many units. However, according to our purposes, we confine ourselves to real quadratic integer rings $D$. 

\begin{Theorem}\label{comaximal}
Let $D$ be the ring of integers of a real quadratic number field. If $x,y\in D$ generate a principal ideal of $D$, then $\begin{pmatrix}
x & y\\
0 & 0
\end{pmatrix}$ admits idempotent factorizations.
\end{Theorem}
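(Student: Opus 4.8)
The plan is to deduce the statement from Lemma \ref{we_ID2} by manufacturing a weak algorithm for $x$ and $y$; the hypothesis of principality is exactly what makes this possible, through Lemma \ref{OMeara_based} and the fact, recalled above, that $D$ satisfies GE$_2$. First I would dispose of the degenerate cases. If $x=y=0$, then $[0\ 0]$ is the zero matrix, which is idempotent. If exactly one entry vanishes, say $y=0$, then the identity $[x\ 0]=[1\ -1]\begin{pmatrix}1 & 0\\ 1-x & 0\end{pmatrix}$ --- precisely the computation closing the proof of Lemma \ref{we_ID2} --- writes it as a product of two idempotents, and the case $x=0$ then follows from Lemma \ref{scambio}. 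Hence I may assume $x\neq 0\neq y$.

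Now write the principal ideal as $xD+yD=gD$ with $g\neq 0$, so that $x=ga$ and $y=gb$ for some $a,b\in D$. Since $g\in xD+yD$, there are $s,t\in D$ with $g=sx+ty=g(sa+tb)$, and cancelling $g$ (legitimate in a domain) gives $sa+tb=1$. In particular $a$ and $b$ are nonzero and generate the unit ideal, so the matrix $\begin{pmatrix}a & b\\ -t & s\end{pmatrix}$ has determinant $as+bt=1$ and thus lies in $SL_2(D)$. Because $D$ satisfies GE$_2$, this matrix lies in GE$_2(D)$, so Lemma \ref{OMeara_based} supplies a weak algorithm $r_{-1}=a,\ r_0=b,\ \dots,\ r_n=0$ for the pair $a,b$. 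Multiplying the entire chain of divisions $r_i=q_{i+1}r_{i+1}+r_{i+2}$ by $g$ leaves the quotients $q_{i+1}$ unchanged and turns it into a weak algorithm for $ga=x$ and $gb=y$, with final remainder $g\cdot 0=0$. An application of Lemma \ref{we_ID2} then yields $[x\ y]\in{\rm ID}_2(D)$, as desired.

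The step I expect to require the most care is the passage from $a,b$ to $x=ga$, $y=gb$. The tempting factorization $[ga\ gb]=\begin{pmatrix}g & 0\\ 0 & 0\end{pmatrix}[a\ b]$ is useless, since $\begin{pmatrix}g & 0\\ 0 & 0\end{pmatrix}$ is not idempotent unless $g$ is a unit, and the generator $g$ simply cannot be absorbed into an idempotent factor at the level of matrices. The resolution is to carry $g$ through at the level of the weak algorithm instead, where rescaling the whole division chain is harmless; this is what lets the coprime case $[a\ b]$ propagate to the general principal case $[x\ y]$.
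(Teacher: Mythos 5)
Your proof is correct, and its core is the same as the paper's: complete the coprime pair to a matrix in $SL_2(D)$, invoke Vaser\v{s}te\u{\i}n's theorem that $D$ satisfies GE$_2$, extract a weak algorithm via Lemma \ref{OMeara_based}, and conclude with Lemma \ref{we_ID2}. Where you diverge is the reduction of the general principal case to the coprime one: you rescale the entire weak algorithm for $a,b$ by the generator $g$, which is valid and clean, since the division relations $r_i=q_{i+1}r_{i+1}+r_{i+2}$ are homogeneous in the $r_i$. The paper instead uses exactly the matrix factorization you dismiss as ``useless'', namely $[x\ y]=[g\ 0]\,[a\ b]$, and your reason for dismissing it is mistaken: while $[g\ 0]$ is indeed not idempotent when $g$ is a non-unit, it \emph{is} a product of two idempotents,
\[
[g\ 0]=[1\ \ {-1}]\begin{pmatrix}1 & 0\\ 1-g & 0\end{pmatrix},
\]
which is the very identity you yourself use for the degenerate case $y=0$; hence knowing $[a\ b]\in{\rm ID}_2(D)$ immediately gives $[x\ y]\in{\rm ID}_2(D)$. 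So both reductions work: yours invokes Lemma \ref{we_ID2} only once and keeps everything at the level of the algorithm, while the paper's is a one-line matrix identity. A small point in your favor is the explicit treatment of the cases $x=0$ or $y=0$, which the paper glosses over even though the weak-algorithm machinery is stated only for nonzero elements.
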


\begin{proof}
It is not restrictive to assume that $x D+ y D= D$. In fact, if $x= d x'$ and $y= d y'$ with $d,x',y'\in D$ and $x' D+ y' D= D$, then $[x \ y]=[d \ 0] [x' \ y']$, where $[d \ 0]=[1 \ -1] [1 \quad 1-p]^T  \in {\rm ID}_2(D)$. Therefore there exist $z,t\in D$ such that $\begin{pmatrix}
x & y\\
z & t
\end{pmatrix}\in SL_2(D)$. Since $D$ satisfies property GE$_2$ \cite{Vas}, $x$ and $y$ admit a weak Euclidean algorithm by Lemma \ref{OMeara_based}, and we conclude by Lemma \ref{we_ID2}.
\end{proof}

The previous theorem is crucial to show that any matrix $[x \ y]$ ($x, y \in D$) admits idempotent factorizations.

\begin{Theorem}\label{generalcase}
Let $D$ be the ring of integers of the real quadratic number field $\Q[\sqrt{d}]$, $d > 0$ a square-free integer.
Then every matrix over $D$ of the form  $\begin{pmatrix}
x & y\\
0 & 0
\end{pmatrix}$ admits idempotent factorizations. 
\end{Theorem}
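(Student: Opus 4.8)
The plan is to reduce an arbitrary $[x\ y]$ to the comaximal situation already settled in Theorem \ref{comaximal}. First I would dispose of the degenerate cases: if $y=0$ (or, by Lemma \ref{scambio}, if $x=0$) then $[x\ 0]=\begin{pmatrix}1&-1\\0&0\end{pmatrix}\begin{pmatrix}1&0\\1-x&0\end{pmatrix}$ is a product of two idempotents, so $[x\ 0]\in{\rm ID}_2(D)$. When $x,y$ are both nonzero, Lemma \ref{x_integer} lets me replace $[x\ y]$ by a matrix $[\alpha\ w]$ with $\alpha\in\Z$, $w\in D$, without changing membership in ${\rm ID}_2(D)$. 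Set $I=\alpha D+wD$. If $I$ is principal I am done at once: writing $I=\delta D$ and $\alpha=\delta\alpha'$, $w=\delta w'$ with $\alpha' D+w'D=D$, the factorization $[\alpha\ w]=[\delta\ 0]\,[\alpha'\ w']$ exhibits $[\alpha\ w]$ as a product of an element of ${\rm ID}_2(D)$ (the first case) and a comaximal matrix handled by Theorem \ref{comaximal}. So the whole difficulty is concentrated in the case where $I$ is \emph{not} principal; this is the main obstacle, and it genuinely occurs, for instance for $[\,2\ \sqrt{10}\,]$ over $\Z[\sqrt{10}]$, whose class number is $2$.

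The key idea for the non-principal case is to peel off a single idempotent on the right so as to land on a comaximal (unimodular) row, to which Theorem \ref{comaximal} applies. Concretely, I look for an idempotent $B\in M_2(D)$ fixing the row $(\alpha\ w)$, i.e.\ $(\alpha\ w)B=(\alpha\ w)$, together with a unimodular row $(\alpha_1\ w_1)$ (meaning $\alpha_1 D+w_1 D=D$) lying in the same coset modulo the kernel of right multiplication by $B$, so that $(\alpha_1\ w_1)B=(\alpha\ w)$. Since the bottom row of $[\alpha_1\ w_1]$ is zero, this yields the matrix identity $[\alpha\ w]=[\alpha_1\ w_1]\,B$. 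As $[\alpha_1\ w_1]\in{\rm ID}_2(D)$ by Theorem \ref{comaximal} and $B$ is idempotent, the product is again a product of idempotents, and the proof is complete.

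It remains to build $B$ and $(\alpha_1\ w_1)$ from the ideal $I$. Choosing a row $(u_1\ u_2)$ to span the kernel line and setting $\Delta=\alpha u_2-w u_1$, the projection onto $k(\alpha\ w)$ along $(u_1\ u_2)$ is $B=\Delta^{-1}\begin{pmatrix}\alpha u_2 & w u_2\\ -\alpha u_1 & -w u_1\end{pmatrix}$, which automatically has determinant $0$ and trace $1$, hence is idempotent and fixes $(\alpha\ w)$; the only genuine requirement is that its entries lie in $D$. Here I would invoke the arithmetic of the Dedekind domain $D$: taking the conjugate ideal $\bar I$ (so that $I\bar I=N(I)D$ is principal), or any integral ideal $J$ in the inverse class $[I]^{-1}$, the identity $\alpha J+wJ=IJ$ lets me pick $u_1,u_2\in J$ with $\Delta=\alpha u_2-w u_1$ a generator of the principal ideal $IJ$; then $\alpha u_i,\,w u_i\in IJ=\Delta D$ forces every entry of $B$ into $D$. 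For the unimodular lift I use the standard fact that every ideal class of $D$ contains an integral ideal coprime to a prescribed one: reading this through the splitting of $D^2$ into the image and kernel of right multiplication by $B$ produces $\kappa$ in the kernel with $(\alpha\ w)+\kappa$ comaximal, which is the desired $(\alpha_1\ w_1)$. The hardest point is exactly this interface between the matrix identity and ideal-class arithmetic, namely guaranteeing \emph{simultaneously} that $B$ has entries in $D$ and that a comaximal representative exists in the relevant coset; once both are secured, Theorem \ref{comaximal} finishes the argument.
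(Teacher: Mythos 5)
Your overall architecture is sound and in fact matches the skeleton of the paper's own argument: after the easy reductions, both proofs write $[x \ y]$ as (comaximal row)$\,\times\,$(a single idempotent) and then invoke Theorem \ref{comaximal}. Moreover, your construction of the idempotent $B$ is correct and fully justified: it has trace $1$ and determinant $0$, it fixes the row $(\alpha\ w)$, and choosing $u_1,u_2\in J$ with $\Delta=\alpha u_2-wu_1$ a generator of the principal ideal $IJ$ (possible because $\{\alpha u_2-wu_1: u_i\in J\}=\alpha J+wJ=IJ$) does force all entries of $B$ into $D$. The genuine gap is the step you yourself flag as the hardest point: the existence of a comaximal row in the coset $(\alpha\ w)+\ker B$. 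You assert that this follows by ``reading'' the standard fact that every ideal class contains an integral ideal coprime to a prescribed one ``through the splitting'' $D^2=\mathrm{im}\,B\oplus\ker B$, but no argument is given: that fact is a statement about ideals, not about cosets in $D^2$, and nothing in the proposal explains why the obstructing primes can be avoided simultaneously by a single element of the kernel. Since the entire difficulty of the non-principal case is concentrated exactly here, the proof is incomplete as written.

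The claim is true, and the fact you cite is indeed the right tool, but it must be deployed concretely; for instance as follows. Choose the integral ideal $J$ in the inverse class of $I$ to be \emph{coprime} to $I$. Then for any solution $u_1,u_2\in J$ of $\alpha u_2-wu_1=\Delta$ you may take $\kappa=(u_1\ u_2)$ itself, i.e.\ $(\alpha_1\ w_1)=(\alpha+u_1\ \ w+u_2)$, and comaximality is checked prime by prime: if $\mathfrak{p}\mid I$, then one of $u_1,u_2$ is a unit at $\mathfrak{p}$ (otherwise $v_{\mathfrak{p}}(\Delta)\ge v_{\mathfrak{p}}(I)+1>v_{\mathfrak{p}}(IJ)$, since $v_{\mathfrak{p}}(J)=0$), while $\alpha,w\in\mathfrak{p}$, so $\alpha+u_1$ or $w+u_2$ is a unit at $\mathfrak{p}$; if $\mathfrak{p}\nmid I$ and both $\alpha+u_1$ and $w+u_2$ lay in $\mathfrak{p}$, the identity $(\alpha+u_1)u_2-(w+u_2)u_1=\Delta$ would give $\Delta\in\mathfrak{p}J$, contradicting $v_{\mathfrak{p}}(\Delta)=v_{\mathfrak{p}}(J)$. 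With that paragraph supplied your proof closes, and it is genuinely different from the paper's: the paper stays elementary, using Lemma \ref{x_integer} to make $x$ a rational integer, a B\'ezout identity in $\Z$ involving $\gcd(x,||y||)$ to write the idempotent factor down explicitly, and a CRT ``shearing'' $y\mapsto y+ex$ together with a case split on $d\bmod 4$ to force the needed gcd condition. Your route trades all of that for Dedekind-domain arithmetic, never needs $\alpha\in\Z$ (so Lemma \ref{x_integer} becomes superfluous), and would work verbatim over any ring of $S$-integers with infinitely many units.
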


\begin{proof}
By Lemma \ref{x_integer} we may assume, without loss of generality, that $x \in \Z$. If $x, y$ have a common non-unit factor in $D$, say $z$, we get $[x \ y] = [z \ 0] [ x/z \ y/z]$, where $[z \ 0] \in {\rm ID}_2(D)$. Hence $[x \ z] \in {\rm ID}_2(D)$ as soon as $[x/z \ y/z] \in {\rm ID}_2(D)$. It follows that we may also assume that $x, y$ have no common non-unit factors in $D$. Moreover, by Theorem \ref{comaximal}, $[ x \ y]$ is a product of idempotent matrices if the ideal $xD + yD$ is principal. So we may assume that $xD + yD \ne D$, hence, in particular, $m = \gcd(x, ||y||) \ne 1$ ($||y|| = y \bar{y}$ is the norm of $y$). 

We proceed by steps.

\medskip
\noindent
{\bf STEP 1}. Assume $\gcd(x, ||y||/m) = 1$, where $1 \ne m = \gcd (x, ||y||)$. Then $[x \ y ] \in {\rm ID}_2(D)$ 

\medskip

We want to show that there exist an idempotent pair $a, b$ in $D$, and comaximal elements $x', y'$ of $D$ such that
\[ [x \ y]= [x' \ y']\begin{pmatrix}
a & b \\
c & 1 - a
\end{pmatrix}\,.\]

Let us set $||y||/m = \la$, and take $a', t \in \Z$ such that $a'x + t \la = 1$.
In the above notation, set
\[
\begin{pmatrix}
a & b \\
c & 1 - a
\end{pmatrix} =
\begin{pmatrix}
xa' & y a' \\
t \bar{y} x/m & 1 - xa' 
\end{pmatrix} \, .
\]
Note that  $1 - x a' = t \la$, and $xa' t \la - ya't \bar{y}x/m = 0$, hence the above matrix is idempotent.

Let us define $x' = x - t \la$, $y' = y(1 + a')$
Then we get 
\[
a x' + c y' = xa'(x - t \la) + t \bar{y}( x/m) y(1+a') = x(a'x + t \la) = x.
\]
Since the involved matrices are singular, we also get $ b x' + (1-a) y' = y$. Hence $x', y' \in D$ solve the above matrix equation. 

It remains to show that $(x', y') = D$. It suffices to verify that the integers $x - t \la$ and $||y||(1 + a')^2 = m \la (1 + a')^2$ are coprime. Both $\la$ and $t$ are coprime with $x$, hence they are coprime with $x'$, as well. Moreover 
\[
a'(x - t \la) + t \la(1 + a') = 1
\]
shows that $x'$ is coprime with $1 + a'$.
The desired conclusion follows.

Since $[ x' \ y'] \in {\rm ID}_2(D)$ by Theorem \ref{comaximal}, we immediately derive that $[x \ y] \in {\rm ID}_2(D)$, as well. 

\medskip
\noindent
{\bf STEP 2:} Let $d \equiv 2,3$ modulo $4$, so that $D = \Z[\sqrt{d}]$, and assume that $\gcd(x, ||y||/m) = s \ne 1$. Then $[x \ y] \in {\rm ID}_2(D)$.

\medskip
 
Say $y = y_1 + y_2\sqrt{d}$, $y_1, y_2 \in \Z$. Note that $(x, y_1, y_2) = 1$, since, by assumption, $x$ and $y$ have no common non-unit factors. We write $x = mx_0$, $||y|| = m \la$.
 For any prime number $p$, we consider the $p$-adic valuation on $\Q$, denoted by $v_p$. Under the present circumstances the following facts hold. 

\medskip
\noindent
{\bf FACT 1.} In the above notation, we have:

(i) if the prime number $p$ divides $s$, then $v_p(x_0) = 0$, $v_p(m) > 0$ and $v_p(\la) > 0$; 

(ii) if $p$ divides $x$, $y_1$ and $||y||$, then $v_p(||y||) = 1$; 

(iii) if $2$ divides $x$, $||y||$, and $y_1 \notin 2 \Z$, then $v_2(\la) = 0$.

\begin{proof}
(i) The statement follows readily from the definitions. 

(ii) Assume that $p$ divides $x$, $y_1$ and $||y||$. Then $y_2 \notin p \Z$, hence $v_p(||y||) = v_p(y_1^2 - d y_2^2) \ge 1$ yields $v_p(d y_2^2) = v_p(d) \ge 1$. It follows that $v_p(d) = 1$, since $d$ is square-free, so that $v_p(y_1^2) \ge 2$ yields $v_p(||y||) = v_p(d) = 1$. 

(iii) Assume that $2$ divides $x$, $||y||$, but $y_1 \notin 2 \Z$. It follows that $d y_2^2 \notin 2 \Z$, as well. We get
 $$
 ||y|| = y_1^2 - d y_2^2 \equiv 1 - d \not\equiv 0 \quad {\rm mod} \ 4.
 $$
  We conclude that $2$ does not divide $||y||/m = \la$. 
\end{proof}

\medskip
\noindent
{\bf FACT 2.} In the above notation, we may choose $e \in \Z$ such that $v_q(\la + e x_0 2 y_1) = 0$ for every prime number $q$ such that $v_q(\la) = 0$ and $v_q(x_0) = 0$. Then $x$ is coprime with $||y + e x||/m$.

\begin{proof}
We firstly show how to get $e \in \Z$ as in the statement. Let $A_1$ be the set of the primes $q$ dividing $x$ such that $v_q(\la) = 0$, $v_q(x_0) = 0$, and $v_q(\la +  x_0 2 y_1) > 0$; let $A_2$ be the set of the primes $q'$ dividing $x$ such that $v_{q'}(\la) = 0$, $v_{q'}(x_0) = 0$, and $v_{q'}(\la + x_0 2 y_1) = 0$; let $A_3$ be the set of the other primes $p'$ dividing $x$. By the Chinese Remainder Theorem there exists $e \in \Z$ such that
$$
e \equiv 2 \quad {\rm mod} \ q  \quad {\rm when} \ q \in A_1,
$$
$$
e \equiv 1 \quad {\rm mod} \  q'  \quad {\rm when} \ q' \in A_2,
$$
$$
e \equiv 1 \quad {\rm mod} \  p \quad {\rm when} \ p \in A_3.
$$
 Since $v_q(\la) = 0$ and $v_q(\la +  x_0 2 y_1) > 0$ imply $v_q(x_02y_1) = 0$, it is readily seen that $v_q(\la + e x_0 2 y_1) = 0$ for every $q \in A_1 \cup A_2$.

By direct computation, we get
$$
||y + ex||/m =  \la + ex_0 2 y_1 + e^2 x_0^2m.
$$
Take a prime number $p$ dividing $x$. 

If $v_p(x) > v_p(||y||)$, then $v_p(||y||) = v_p(m)$, hence $v_p(x_0) > 0$ and $v_p(\la) = 0$. It follows that $v_p( \la + e x_0 2 y_1 + e^2 x_0^2m) = v_p(\la) = 0$.

If $v_p(||y||) > v_p(x)$, then $v_p(x) = v_p(m)$, hence $p$ divides $s$. By statement (i) of Fact 1, $v_p(x_0) = 0$ and $v_p(\la) > 0$. Moreover, by statements (ii) and (iii) of Fact 1, we get $p \ne 2$ and $y_1 \notin p \Z$. It follows that $v_p( \la + e x_0 2 y_1 + e^2 x_0^2m) = v_p(e x_0 2 y_1) = v_p(x_0) = 0$.

Finally, assume that $v_p(x) = v_p(||y||)$. It follows that $v_p(x) = v_p(m)$, hence $v_p(x_0) = 0 = v_p(\la)$. Under the present circumstances, we get $v_p( \la + e x_0 2 y_1 + e^2 x_0^2m) = v_p( \la + e x_0 2 y_1) = 0$, by our choice of $e \in \Z$.

We conclude that $v_p(|| y + e x||) = 0$ for every $p$ dividing $x$, hence $x$ is coprime with $||y + e x||/m$.
\end{proof}

Due to the above FACT 2, since $[x \ y]$ is similar to the matrix $[x \ y+ex]$, that admits idempotent factorizations by STEP 1, we deduce that also the matrix $[ x \ y]$ has idempotent factorizations.

\medskip
{\bf STEP 3.} Let $d \equiv 1$ modulo $4$, so that $D = \Z[(1 +\sqrt{d})/2]$, and assume that $\gcd(x, ||y||/m) = s \ne 1$. Then $[x \ y] \in {\rm ID}_2(D)$.

\medskip

Say $y = (y_1 + y_2 \sqrt{d})/2 \in D$, with $y_1, y_2 \in \Z$, $ y_1 - y_2 \in 2\Z$. As in the second step we get $(x, y_1, y_2) = 1$.

We distinguish two cases:

\medskip
\noindent
{\bf CASE (a).} $y \in 2D$, so that $x \notin 2 \Z$.

\medskip

For convenience, we change notation and write $y = y_1 + y_2 \sqrt{d}$, where $y_1, y_2 \in \Z$ and $(x, 2y_1, 2y_2) = 1$.

We get

\medskip
\noindent
{\bf FACT 1(a).} In the above notation, we have:

(i) if the prime number $p$ divides $s$, then $v_p(x_0) = 0$, $v_p(m) > 0$ and $v_p(\la) > 0$; 

(ii) if $p$ divides $x$, $y_1$ and $||y||$, then $v_p(||y||) = 1$; 

\medskip

The proof is identical to that of Fact 1 (i), (ii) of the second section. Note that it is not necessary to considerer (iii) of Fact 1, since $p \ne 2$.

\medskip
\noindent
{\bf FACT 2(a).} In the above notation, we may choose $e \in \Z$ such that $v_q(\la + e x_0 2 y_1) = 0$ for every prime number $q$ such that $v_q(\la) = 0$ and $v_q(x_0) = 0$. Then $x$ is coprime with $||y + e x||/m$.

\medskip

The proof is identical to that of Fact 2 of the second section. (Actually, it is slightly simpler, since $p \ne 2$, hence $v_p(2) = 0$ for every $p$ dividing $x$.)

\medskip
\noindent
{\bf CASE (b).} $y = (y_1 + y_2 \sqrt{d})/2 \notin 2D$, so that $y_1, y_2$ are both odd.

\medskip

In this case we get

\medskip
\noindent
{\bf FACT 1(b).} In the above notation, we have:

(i) if the prime number $p$ divides $s$, then $v_p(x_0) = 0$, $v_p(m) > 0$ and $v_p(\la) > 0$; 

(ii) if $p$ divides $x$, $y_1$ and $||y||$, then $v_p(||y||) = 1$ (note that $p \ne 2$).

\begin{proof}
(i) The proof is identical to that of Fact 1(i).

(ii) We have $||y|| = (y_1^2 - d y_2^2)/4$. Since $p \ne 2$, $y_2 \notin p\Z$, $v_p(y_1^2) \ge 2$ and $v_p(d) \le 1$ (because $d$ is square-free), we get 
$$
1 \le v_p(||y||) = v_p(y_1^2 - d y_2^2) - v_p(4) = v_p(d) \le 1.
$$
\end{proof}

\medskip
\noindent
{\bf FACT 2(b).} In the above notation, we may choose $e \in \Z$ such that $v_q(\la + e x_0  y_1) = 0$ for every prime number $q$ such that $v_q(\la) = 0$ and $v_q(x_0) = 0$. Then $x$ is coprime with $||y + e x||/m$.

\begin{proof}
Under the present circumstances, from $y = (y_1 + y_2 \sqrt{d})/2$ we get
$$
||y + ex||/m =  \la + ex_0 y_1 + e^2 x_0^2m.
$$
Then the proof follows the argument in Fact 2, replacing $\la + e x_0 2 y_1$ with $\la + e x_0  y_1$.
\end{proof}

Now we see that $[x \ y] \in {\rm ID}_2(D)$, as in the second step. .

\medskip
We have reached the desired conclusion, since the preceding steps cover all the possibilities. 
\end{proof}

\section{Column-row matrices and Examples}

Following the terminology in \cite{SalZan}, a matrix $\textbf{M}\in M_2(R)$, with $R$ any integral domain, is called \textit{column-row} if there exist $a,b,x,y\in R$ such that
\[\textbf{M}=[ x \ y]^T [a \ b]=\begin{pmatrix}
xa & xb\\
ya & yb
\end{pmatrix}.\]
An easy computation shows that if $\textbf{M}$ is a singular matrix in $M_2(R)$ and the ideal generated by the elements of its first row is principal, then $\textbf{M}$ is a column-row matrix \cite[Prop.~2.2]{SalZan}. 

\begin{cor}\label{col-row}
Let $D$ be the ring of integers over any real quadratic number field. Any column-row matrix over $D$ is a product of idempotent matrices over $D$. In particular, every singular matrix in $M_2(D)$ having at least one row or column whose elements generate a principal ideal is a product of idempotent matrices over $D$.
\end{cor}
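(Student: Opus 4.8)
The plan is to deduce this from Theorem \ref{generalcase} together with the stability of idempotent factorizations under transposition and similarity, both recorded in Section 1. First I would dispatch the column-row case directly. Writing $\mathbf{M} = [x \ y]^T [a \ b]$, the right-hand factor $[a \ b] = \begin{pmatrix} a & b \\ 0 & 0 \end{pmatrix}$ lies in ${\rm ID}_2(D)$ by Theorem \ref{generalcase}. The left-hand factor $[x \ y]^T = \begin{pmatrix} x & 0 \\ y & 0 \end{pmatrix}$ is the transpose of $[x \ y]$, which is again a product of idempotents by Theorem \ref{generalcase}; since the property of being a product of idempotents is preserved by transposition (Section 1), $[x \ y]^T \in {\rm ID}_2(D)$ as well. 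A product of two matrices, each of which is a product of idempotents, is itself a product of idempotents, whence $\mathbf{M} \in {\rm ID}_2(D)$.

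For the second assertion, I would invoke \cite[Prop.~2.2]{SalZan}: a \emph{singular} matrix in $M_2(D)$ whose first row generates a principal ideal is a column-row matrix, and is therefore a product of idempotents by the first part. It then remains to reduce the remaining three positions to this one using the symmetry operations of Section 1. If the second row of $\mathbf{S} = \begin{pmatrix} a & b \\ c & d \end{pmatrix}$ generates the principal ideal, conjugate by the swap matrix $P = \begin{pmatrix} 0 & 1 \\ 1 & 0 \end{pmatrix}$: the matrix $P\mathbf{S}P = \begin{pmatrix} d & c \\ b & a \end{pmatrix}$ is similar to $\mathbf{S}$ and has first row $(d \ c)$, which generates the same principal ideal, so similarity-invariance of idempotent factorizations reduces us to the first-row case. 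If instead a column of $\mathbf{S}$ generates the principal ideal, pass to $\mathbf{S}^T$, whose corresponding row then generates that ideal; since $\mathbf{S}^T \in {\rm ID}_2(D)$ if and only if $\mathbf{S} \in {\rm ID}_2(D)$, this case follows from the row cases.

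The entire mathematical weight of the statement sits in Theorem \ref{generalcase}, so I do not expect a genuine obstacle here — the only care required is the bookkeeping of which row or column carries the principal ideal, all absorbed by the transposition and swap-conjugation moves above. The two points worth checking explicitly are that singularity is really used (it is exactly the hypothesis of \cite[Prop.~2.2]{SalZan} that converts a matrix with a principal row-ideal into a column-row matrix), and that degenerate configurations are already covered: a matrix with a zero row or column is literally of the form $[a \ b]$ or $[x \ y]^T$, hence handled at once by Theorem \ref{generalcase} and transposition.
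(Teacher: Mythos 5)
Your proof is correct and follows essentially the same route as the paper's: the column-row case is exactly the ``direct consequence'' of Theorem \ref{generalcase} that the paper invokes (you merely spell out the factorization $\mathbf{M}=[x\ y]^T[a\ b]$ and the transposition step), and the four row/column cases are handled by the same combination of \cite[Prop.~2.2]{SalZan}, conjugation by the swap matrix $\begin{pmatrix} 0 & 1\\ 1 & 0\end{pmatrix}$, and invariance under transposition that the paper uses.
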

\begin{proof}
The first part of the statement is a direct consequence of Theorem \ref{generalcase}. Let $\mathbf{M}=\begin{pmatrix}
x & y\\
z & t
\end{pmatrix}$ be a singular matrix over $D$. If $(x,y)$ is principal, then $\mathbf{M}\in {\rm ID}_2(D)$ since, as observed above, it is column-row. If $(x,z)$ is a principal ideal, then ${\bf M}^T$ is column-row and then product of idempotent matrices over $D$. We conclude since ${\bf M}\in {\rm ID}_2(D)$ if and only if ${\bf M}^T\in {\rm ID}_2(D)$. If $(z,t)$ is principal, the matrix ${\bf N}=\begin{pmatrix}
t & z\\
y & x
\end{pmatrix}\in{\rm ID}_2(D)$. Therefore ${\bf M}\in{\rm ID}_2(D)$ since $\begin{pmatrix}
0 & 1\\
1 & 0
\end{pmatrix}{\bf N}\begin{pmatrix}
0 & 1\\
1 & 0
\end{pmatrix}={\bf M}$. Finally if $(y,t)$ is a principal ideal of $D$, then ${\bf N}^T$ is column-row and we conclude using the previous arguments.
\end{proof}

\begin{example}[Family of non-column-row matrices]\label{ex}
Let $p$ be a prime integer which is irreducible but not prime in the real quadratic integer ring $D$. If $z$ is an element in $D$ such that $(p,z)$ is a non-principal ideal of $D$, the singular matrix $\begin{pmatrix}
p & z\\
\bar{z} & || z ||/p
\end{pmatrix}$ is not a column-row matrix over $D$. In fact if $\begin{pmatrix}
p & z\\
\bar{z} & || z ||/p
\end{pmatrix}=\begin{pmatrix}
xa & xb\\
ya & yb
\end{pmatrix}$, from $p$ irreducible it follows that either $x\in D^{\times}$ or $a\in D^{\times}$, hence $(p , z)$ is principal, impossible. 
\end{example}

A complete classification of $2\times 2$ singular matrices over $D$ is still an open issue. We can only say, by Corollary \ref{col-row}, that a singular matrix in $M_2(D)$ which is not column-row must have rows and columns whose elements generate non-principal ideals. The question whether these matrices all lie in ID$_2(D)$ remains unanswered. 

In this last section we will exhibit some examples of idempotent factorizations of non-column-row matrices over $\Z[\sqrt{10}]$.

As recalled in the introduction, property ID$_2$ and GE$_2$ are equivalent over B\'ezout domains, therefore every real quadratic integer ring which is a PID satisfies property ID$_2$ by Vaser\v{s}te\u{\i}n's result \cite{Vas}. What happens over a non B\'ezout domain is still not known in general. The simplest example of a real quadratic integer ring that is not a PID is $\Z[\sqrt{10}]$. Over this ring we can show explicit examples of idempotent factorizations of non-column-row $2\times 2$ matrices obtained through rough direct computations with the support of the quadratic Diophantine equations BCMATH online programs at \url{http://www.numbertheory.org/php/main_pell.html}. In what follows $D=\Z[\sqrt{10}]$.

We firstly consider matrices of the form $\begin{pmatrix}
p & z\\
\bar{z} & || z ||/p
\end{pmatrix}$ as in Example \ref{ex}, i.e. $p$ is a prime number irreducible in $D$, $z \in D \setminus pD$, and $||z|| \in pD$:
\begin{equation*}
\small
\begin{pmatrix}
3 & 1+\sqrt{10}\\
1-\sqrt{10} & -3
\end{pmatrix}=\begin{pmatrix}
2+2\sqrt{10} & 7+\sqrt{10}\\
-6  & -1-2\sqrt{10}
\end{pmatrix}\begin{pmatrix}
2-2\sqrt{10} & -6\\
7-\sqrt{10} & -1+2\sqrt{10}
\end{pmatrix}
\end{equation*}
\begin{equation*}
\small
\begin{pmatrix}
2 & \sqrt{10}\\
-\sqrt{10} & -5
\end{pmatrix}=\begin{pmatrix}
6+2\sqrt{10} & 4+\sqrt{10}\\
 -10-3\sqrt{10}  &  -5-2\sqrt{10}
\end{pmatrix}\begin{pmatrix}
6-2\sqrt{10} & -10+3\sqrt{10}\\
4-\sqrt{10} & -5+2\sqrt{10}
\end{pmatrix}
\end{equation*}

The above factorizations seem to suggest that a matrix $\begin{pmatrix}
p & z\\
\bar{z} & || z ||/p
\end{pmatrix}$ of that type factorizes as 
\begin{equation}\label{decomp}
\begin{pmatrix}
p & z\\
\bar{z} & || z ||/p
\end{pmatrix}=\begin{pmatrix}
\bar{a} & \bar{c}\\
\bar{b} & 1-\bar{a}
\end{pmatrix}\begin{pmatrix}
a & b\\
c & 1-a
\end{pmatrix},
\end{equation}
for a suitable idempotent pair $a, b$.

A factorization of the desired form can be found also for the matrix $\begin{pmatrix}
13 & 8+3\sqrt{10}\\
8-3\sqrt{10} & -2
\end{pmatrix}$, with $a=-2738487 - 865986 \sqrt{10}$, $b=-3683652 - 1164873 \sqrt{10}$, $c=2035838 + 643788 \sqrt{10}$. 

\medskip

Unfortunately, our calculations are strongly dependent on $p$ and their complexity appear to increase with its size. This makes hard to prove or disprove the validity of \ref{decomp} in general.

However, we can 
show that the decomposition in \ref{decomp} does not extend to matrices where $p$ is replaced by a non-prime integer. For instance, consider ${\bf S}=\begin{pmatrix}
8 & 2\sqrt{10}\\
-2\sqrt{10} & -5
\end{pmatrix}\in M_2(D)$. Easy considerations on the norms of its entries show that ${\bf S}$ is not a column-row matrix. If we assume that $ S=\bf T \ \bf U$, where ${\bf T} \in M_2(D)$ and ${\bf U}=\begin{pmatrix}
a & b\\
c & 1-a
\end{pmatrix}$ is idempotent over $D$, we get $a/b=c/(1-a)=8/(2\sqrt{10})$. This is possible only if 
\begin{align*}
&a=20 k - 4 + 2 h \sqrt{10},\\
&b=5 h + (5 k - 1)\sqrt{10},\\
&c=-8 h + 2 (1 - 4 k) \sqrt{10},
\end{align*}
with $h,k \in\Z$. Moreover, ${\bf T} = {\bf U}^H =\begin{pmatrix}
\bar{a} & \bar{c}\\
\bar{b} & 1-\bar{a}
\end{pmatrix}$ only if there exist $h,k\in\Z$ such that 
\[{\bf S}=\begin{pmatrix}
8(3h^2-30k^2+20k-3) & 2\sqrt{10}(3h^2-30k^2+20k-3)\\
-2\sqrt{10}(3h^2-30k^2+20k-3) & -5(3h^2-30k^2+20k-3)
\end{pmatrix},\]
i.e., only if there exist $h,k\in\Z$ such that $3h^2-30k^2+20k-3=1$. This is impossible, as it is immediate to see from the corresponding equivalence modulo $5$. We conclude that ${\bf S} \ne {\bf U}^H {\bf U}$, for any idempotent matrix $\bf U$. 

Nonetheless, we have found a factorization of ${\bf S}$ as product of two idempotent matrices over $D$, namely

\begin{equation*}
\small
{\bf S}=\begin{pmatrix}
-4-2\sqrt{10} & -8-2\sqrt{10}\\
 5+\sqrt{10}  &  5+2\sqrt{10}
\end{pmatrix}\begin{pmatrix}
16-4\sqrt{10} & -10+4\sqrt{10}\\
16-6\sqrt{10} & -15+4\sqrt{10}
\end{pmatrix}.
\end{equation*}

\begin{Remark}
The idempotent factorizations of the non-column-row matrices above raise doubts on the validity of the conjecture  mentioned in the introduction. Note that the examples in \cite{CZ} of Pr\"ufer non-B\'ezout domains not satisfying ID$_2$  do not even satisfy GE$_2$, while real quadratic integer rings do satisfy GE$_2$. Proving that some real quadratic integer ring also satisfies property ID$_2$, one would disprove the conjecture and also suggest that the two properties might be equivalent over Dedekind or even Pr\"ufer domains. Recall that Ruitenburg \cite{Ruit} proved the equivalence over B\'ezout domains.
\end{Remark}

\bibliographystyle{plain}

\begin{thebibliography}{10}
\bibitem{AJLL}
A.~Alahmadi, S.~K.~Jain, T.~Y. Lam, A.~Leroy.
\newblock Euclidean pairs and quasi-{E}uclidean rings.
\newblock {\em J. Algebra}, 406: 154--170, 2014.

\bibitem{AJL_1}
A.~Alahmadi, S.~K.~Jain, A.~Leroy.
\newblock Decomposition of singular matrices into idempotents.
\newblock {\em Linear Multilinear Algebra}, 62(1): 13--27, 2014.


\bibitem{BMS}
H.~Bass, J.~Milnor, J.~-P.~Serre
\newblock{Solution of the congruence subgroup problem for ${\rm
  SL}_{n}\,(n\geq 3)$ and ${\rm Sp}_{2n}\,(n\geq 2)$.}
\newblock {\em Inst. Hautes \'Etudes Sci. Publ. Math.}, 33: 59--137, 1967.

\bibitem{BhasRao}
K.~P.~S. Bhaskara~Rao.
\newblock Products of idempotent matrices over integral domains.
\newblock {\em Linear Algebra Appl.}, 430(10): 2690--2695, 2009.

\bibitem{Cohn}
P.~M.~Cohn.
\newblock On the structure of the {${\rm GL}_{2}$} of a ring.
\newblock {\em Inst. Hautes \'Etudes Sci. Publ. Math.}, (30): 5--53, 1966.

\bibitem{CZ}
L.~Cossu, P.~Zanardo.
\newblock{Factorizations into idempotent factors of matrices over Pr\"ufer domains}.
\newblock{\em Communications in Algebra}, 47(4): 1818--1828, 2019.

\bibitem{CZ_PRINC}
L.~Cossu, P.~Zanardo.
\newblock{PRINC domains and comaximal factorization domains}.
\newblock{\em Journal of Algebra and Its Applications}, to appear, 2020.
\newblock{\url{https://www.worldscientific.com/doi/10.1142/S021949882050156X}.}

\bibitem{CZ_Dress}
L.~Cossu, P.~Zanardo.
\newblock{Minimal Pr\"ufer-Dress rings and products of idempotent matrices.}
\newblock {\em Houston J. Math.}, to appear, 2020.
\newblock {\url{https://arxiv.org/abs/1811.09092}.}


\bibitem{CZZ}
L.~Cossu, P.~Zanardo, U.~Zannier.
\newblock Products of elementary matrices and non-{E}uclidean principal ideal
 domains.
\newblock {\em J. Algebra}, 501: 182--205, 2018.

\bibitem{Erdos}
J.~A.~Erdos.
\newblock On products of idempotent matrices.
\newblock {\em Glasgow Math. J.}, 8: 118--122, 1967.

\bibitem{FL}
A.~Facchini, A.~Leroy.
\newblock Elementary matrices and products of idempotents.
\newblock {\em Linear Multilinear Algebra}, 64(10): 1916--1935, 2016.

\bibitem{Fount}
J.~Fountain.
\newblock Products of idempotent integer matrices.
\newblock {\em Math. Proc. Cambridge Philos. Soc.}, 110(3): 431--441, 1991.

\bibitem{HannahOmeara}
J.~Hannah, K.~C.~O'Meara.
\newblock Products of idempotents in regular rings. {II}.
\newblock {\em J. Algebra}, 123(1): 223--239, 1989.

\bibitem{Kap_el_div}
I.~Kaplansky.
\newblock Elementary divisors and modules.
\newblock {\em Trans. Amer. Math. Soc.}, 66: 464--491, 1949.


\bibitem{Laff1}
T.~J.~Laffey.
\newblock Products of idempotent matrices.
\newblock {\em Linear and Multilinear Algebra}, 14(4): 309--314, 1983.

\bibitem{Liehl}
B.~Liehl.
\newblock{On the group SL$_2$ over orders of arithmetic type.}
\newblock{\em J. Reine Angew. Math}, 323: 153--171, 1981.


\bibitem{MRS}
A.~V.~Morgan, A.~S.~Rapinchuk, B.~Sury.
\newblock{Bounded generation of $\rm SL_2$ over rings of $S$-integers with infinitely many units},
\newblock{\em Algebra Number Theory}, 12(8): 1949--1974, 2018.

\bibitem{NZM}
I.~Niven, H.~S.~Zuckerman, H.~L.~Montgomery.
\newblock{\em An introduction to the theory of numbers, fifth edition},
\newblock{John Wiley \& Sons, Inc., New York}, 1991.


\bibitem{Omeara}
O.~T.~O'Meara.
\newblock{ On the finite generation of linear groups over Hasse domains,}
\newblock{\em J. reine angew. Math.}, 217: 79-128, 1965.

\bibitem{PerSalZan}
G.~Peruginelli, L.~Salce, P.~Zanardo.
\newblock {\em Idempotent pairs and {PRINC} domains}, volume 170 of {\em
 Springer Proc. Math. Stat.}, 309--322.
\newblock Springer, [Cham], 2016.

\bibitem{Ruit}
W.~Ruitenburg.
\newblock Products of idempotent matrices over {H}ermite domains.
\newblock {\em Semigroup Forum}, 46(3): 371--378, 1993.

\bibitem{SalZan}
L.~Salce, P.~Zanardo.
\newblock Products of elementary and idempotent matrices over integral domains. \newblock {\em Linear Algebra Appl.}, 452: 130--152, 2014.

\bibitem{Vas}
L.~N.~Vaser\v{s}te\u{\i}n.
\newblock{The group {$SL_{2}$} over {D}edekind rings of arithmetic type.}
\newblock{\em Mat. Sb. (N.S.)}, 89(131): 313--322, 351, 1972.
\end{thebibliography}

\end{document}